\newtheorem{theorem}{Theorem}[section]
\newtheorem{prop}[theorem]{Proposition}
\theoremstyle{definition}
\theoremstyle{remark}
\numberwithin{equation}{section}
\renewcommand{\le}{\leqslant}
\renewcommand{\ge}{\geqslant}
\renewcommand{\Xi}{\varXi}
\newcommand{\eps}{\varepsilon}
\newcommand{\Z}{\mathbb{Z}}
\newcommand{\N}{\mathbb{N}}
\newcommand{\bO}{\mathcal{O}}
\newcommand{\abs}[1]{\left| #1 \right|}
\DeclareMathOperator{\e}{e}
\begin{document}

\title[On a problem of Chen and Liu]{On a problem of Chen and Liu concerning the prime power factorization of $n!$}

\author{Johannes F. Morgenbesser}
\address{Institut f\"ur Diskrete Mathematik und Geometrie, Technische Universit\"at Wien, Wiedner Hauptstra\ss e 8--10, A--1040 Wien, Austria,
}
\email{johannes.morgenbesser@tuwien.ac.at}
\thanks{The first author was supported by the Austrian Science Foundation FWF, grant S9604, that is part of the National Research Network ``Analytic Combinatorics and Probabilistic Number Theory''.}

\author{Thomas Stoll}
\address{Institut de Math\'ematiques de Luminy, Universit\'e d'Aix-Marseille, 13288 Marseille Cedex 9, France,}
\email{stoll@iml.univ-mrs.fr}
\thanks{This research was supported by the Agence Nationale de la Recherche, grant ANR-10-BLAN 0103 MUNUM}

\keywords{Prime power factorization, $p$-adic valuation, sum of digits, congruences, squares, primes}

\subjclass[2010]{Primary 11N25; Secondary 11A63, 11B50, 11L07, 11N37}

\date{\today}


\commby{}

\begin{abstract}
For a fixed prime $p$, let $e_p(n!)$ denote the order of $p$ in the prime factorization of $n!$. Chen and Liu (2007) asked whether for any fixed $m$, one has
$\{e_p(n^2!) \bmod m:\; n\in\mathbb{Z}\}=\mathbb{Z}_m$ and $\{e_p(q!) \bmod m:\; q \mbox{ prime}\}=\mathbb{Z}_m$. We answer these two questions and show asymptotic formulas for $ \# \{n<x: n \equiv a \bmod d,\; e_p(n^2!)\equiv r \bmod m\}$ and $ \# \{q<x: q \mbox { prime}, q \equiv a \bmod d,\; e_p(q!)\equiv r \bmod m\}$. Furthermore, we show that for each $h\geq 3$,
we have $\{n<x: n \equiv a \bmod d,\; e_p(n^h!)\equiv r \bmod m\} \gg x^{4/(3h+1)}$.
\end{abstract}

\maketitle

\section{Introduction}\label{intro}

Let $p_1=2$, $p_2=3$, $\ldots$ be the sequence of prime numbers in ascending order and consider the prime factorization of 
$$n!=\prod_{p_j\le n} p_j^{e_{p_j}(n!)}.$$
Legendre~\cite[p.10--12]{Le30} (see also~\cite[p.~263]{Di05},~\cite[Ch.~1.3]{Te08}) showed that for any nonnegative integer $n$ and any fixed prime $p$ we have
\begin{equation}\label{legendre}
  e_p(n!)=\sum_{i\geq 1}\left\lfloor\frac{n}{p^i}\right\rfloor=\frac{n-s_p(n)}{p-1},
\end{equation}
where $s_p(n)$ denotes the sum of the digits of $n$ in base $p$, \textit{i.e.},
$$s_p(n)=\sum_{i\geq 0} \varepsilon_i(n),\qquad \mbox{for }\quad n=\sum_{i\geq 0} \varepsilon_i(n) p^i,$$
where $\varepsilon_i(n)\in\{0,1,\ldots,p-1\}$. A well-known area of application for $e_p(n!)$ is the determination of the explicit numerical error term in Mertens first theorem~\cite[Ch.~1.4]{Te08}. The investigation of the distribution properties of $e_p(n!)$ can be said to have started with Erd\H os and Graham~\cite[p.77]{EG80} who stated (in our notation) that \textit{``it is annoying that we cannot even show that for all $k$ there is an $n_k$ so that in the prime decomposition of $n_k!$ all the $e_{p_j}(n_k!)$, $1\le j\le k$, are even.''} In 1997, Berend~\cite{Be97} solved this problem by showing that for any fixed $m\geq 2$ there are infinitely many $n$ that satisfy
$$e_{p_1}(n!)\equiv e_{p_2}(n!)\equiv \cdots \equiv e_{p_k}(n!)\equiv 0 \bmod m,$$
and the set of all such $n$ has bounded gaps. In his solution, Berend~\cite{Be97} strengthened the problem of Erd\H os and Graham in two different directions. On the one hand, he  not only considered the parity of the exponents but studied more generally if they were divisible by a fixed integer $m\ge2$. On the other hand, he
already treated subsets of integers with prescribed multiplicative properties instead of looking at the entire set of integers $n$. In particular, he showed that for arbitrary fixed positive $D, k$ and $m$ there exist infinitely many $n$ such that all the exponents $e_{p_j}((dn)!),\; 1\le j\le k, \, 1\le d\le D,$
are divisible by $m$. 

Several authors considered in the last years extensions of the Erd\H os-Graham problem, namely, Berend/Kolesnik~\cite{BK07}, Chen~\cite{Ch03}, Chen/Liu~\cite{CL06,CL07}, Chen/Zhu~\cite{CL00}, Luca/St{\u{a}}nic{\u{a}}~\cite{LS03}, Sander~\cite{Sa01} and Zhai~\cite{Zh09}. The most general result is due to Berend and Kolesnik~\cite{BK07} who proved unconditionally that
\begin{align*}
&  \#\{0\le n<x : n \equiv a \bmod d,\; e_{q_j}(n!)\equiv r_j \bmod m_j, \;\, 1\le j\le k\}\\
&\hspace{7.0cm}  =\frac{x}{d m_1 m_2\cdots m_k}+\bO\left(x^{1-\delta}\right),\label{BKresult}
\end{align*}
for any integer $a$ and  $d\ge1$ where $k\geq 1$ is fixed, $\mathbf{q}=(q_1, q_2, \ldots, q_k)$ is a vector of distinct, not necessarily ordered primes, $\mathbf{m}=(m_1, m_2, \ldots, m_k)$ is a vector of arbitrary integers $\geq 2$, and $\mathbf{r}=(r_1, r_2, \ldots, r_k)$ is such that $0\le r_j< m_j$ for $j=1,2,\ldots,k$, and $\delta=\delta(\mathbf{m},\mathbf{q},\mathbf{r})>0$ is effectively computable. 

\medskip

%

Intriguing problems arise when the sequence of integers $n$ lying in a fixed residue class is replaced by sparser sequences such as primes, squares or higher-degree powers. Chen and Liu~\cite{CL07} posed several problems in that respect (see also~\cite{Zh09} for generalizations of these problems). In particular, at the end of their paper they remark that they even have no answer to the following basic questions: 
\bigskip

\noindent \textbf{Question~1:} Is it true that for all fixed $p$ and $m$, 
\begin{equation*}
  \{e_p(n^2!) \bmod m:\; n\in\mathbb{Z}\}=\mathbb{Z}_m\; ?
\end{equation*}

\noindent \textbf{Question~2:} Is it true that for all fixed $p$ and $m$, 
\begin{equation*}
  \{e_p(q!) \bmod m:\; q \mbox{ prime}\}=\mathbb{Z}_m\; ?
\end{equation*}

\medskip

Zhai~\cite[Theorems~3 and~4]{Zh09} obtained a partial answer to Question~1. He showed
that for all $h\geq 2$ and $r\in \Z$, there are infinitely many $n$ such that  $e_p(n^h!)\equiv r \bmod m$
provided that
\begin{equation}\label{zhaicondition}
  p\geq \begin{cases}
  4m-2, & \mbox{if } h=2,\\
  h^h m^{h-1}, & \mbox{if } h\geq 3.
  \end{cases}
\end{equation}
From his proof one can obtain a lower bound of the form\footnote{By $f\ll_\omega g$ resp. $f\gg_\omega g$ we mean that there exists a constant $C$ depending at most on $\omega$ such
that $f\le C g$ resp. $f\geq C g$.}
$$ \#\{n<x :\; e_p(n^h!)\equiv r \bmod m\}\gg_{p,h,m} \log x, \qquad x\to \infty.$$
Unfortunately, Zhai's method cannot be applied in the case of small $p$, such as to treat $e_2(n^2!)$ or $e_5(n^2!)$. 

\medskip

The aim of the present paper is to use our current knowledge of the distribution properties of the sum-of-digits function
to give complete answers to Questions~1 and~2. We are able to improve on Zhai's result and to generalize Chen and Liu's questions
in two different respects. First, we are able to drop the superfluous condition~(\ref{zhaicondition}) and to find asymptotic formulas for the counting
functions in the case of squares and primes. Second, we give a general lower bound for $h\geq 3$. 

Using our results we get the following nice application: Let $Z(n)$ be the number of ending 0's in base $10$ of $n!$. Observe that $$Z(n)=\min\{e_{2}(n!),e_{5}(n!)\}=e_{5}(n!).$$
Then it will follow from Theorem~\ref{thm1} that
  $$\lim_{x\to \infty} \frac{1}{x}\#\{ n<x : Z(n^2) \equiv r \bmod m\} = \frac{1}{m}$$
for every $m\ge 2$ and $0\le r < m$. The analogous result holds also true for the number of ending 0's of factorials of primes. 

\section{Main results}

In the sequel, let $\pi(x;a,d)$ be the number of primes $\equiv a\bmod d$ that are less than or equal to $x$. 

\begin{theorem}\label{thm1}
 Let $p$ be a prime, $m,d\geq 1$ and $0\le a<d$, $0\le r < m$. Then there exist constants $\delta^{(1)}_{p,m} >0$ and $\delta^{(2)}_{p,m} >0$ such that
\begin{align*}
 &\#\{ n<x : n\equiv a \bmod d,\, e_p(n^2!) \equiv r \bmod m\} \\
 &\qquad\qquad\qquad\qquad= \frac{x}{dm} + \bO\left((\log x)^{11/4}  x^{1-\delta^{(1)}_{p,m}}\right),
\end{align*}
and
\begin{align*}
 &\#\{ q<x : q \mbox{ prime, } q\equiv a \bmod d,\,  e_p(q!) \equiv r \bmod m\}  \\
 &\qquad\qquad\qquad\qquad= \frac{\pi(x;a,d)}{m} + \bO\left( (\log x)^3 x^{1-\delta^{(2)}_{p,m}}\right).
\end{align*}
The implied constants depend only on $p$.
\end{theorem}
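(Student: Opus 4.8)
The plan is to detect the two congruence conditions by additive characters and reduce the count to exponential sums involving the base-$p$ sum-of-digits function of squares, which can then be controlled by the Mauduit--Rivat circle-method machinery. First I would apply Legendre's formula~(\ref{legendre}) to write
$$e_p(n^2!)=\frac{n^2-s_p(n^2)}{p-1},\qquad\text{so}\qquad \frac{k}{m}\,e_p(n^2!)=\frac{k\bigl(n^2-s_p(n^2)\bigr)}{m(p-1)},$$
which is legitimate since $n^2-s_p(n^2)\equiv0\bmod(p-1)$. Using the orthogonality relation
$$\mathbbm{1}\bigl[e_p(n^2!)\equiv r \bmod m\bigr]=\frac1m\sum_{k=0}^{m-1}\e\!\left(\frac{k\bigl(e_p(n^2!)-r\bigr)}{m}\right),$$
the quantity to be estimated equals
$$\frac1m\sum_{k=0}^{m-1}\e\!\left(-\frac{kr}{m}\right)\,T_k,\qquad T_k:=\sum_{\substack{n<x\\ n\equiv a\,(d)}}\e\!\left(\frac{k\bigl(n^2-s_p(n^2)\bigr)}{m(p-1)}\right).$$
The term $k=0$ contributes $\tfrac1m\,\#\{n<x:n\equiv a\bmod d\}=\tfrac{x}{dm}+\bO(1)$, the desired main term, so everything reduces to bounding $T_k$ for $1\le k\le m-1$.

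Next I would strip the progression $n\equiv a\bmod d$ by a second character sum, writing $T_k=\frac1d\sum_{j=0}^{d-1}\e(-ja/d)\,U_{j,k}$ with
$$U_{j,k}:=\sum_{n<x}\e\!\left(\frac{jn}{d}+\frac{k\,n^2}{m(p-1)}-\frac{k\,s_p(n^2)}{m(p-1)}\right).$$
This has the shape $\sum_{n<x}\e\bigl(Q(n)-\vartheta\,s_p(n^2)\bigr)$ with a real quadratic polynomial $Q(n)=\tfrac{k}{m(p-1)}n^2+\tfrac{jn}{d}$ and digit-twist frequency $\vartheta=\tfrac{k}{m(p-1)}$. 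The decisive observation is that the twist is non-degenerate exactly when $k\not\equiv0\bmod m$: since $(p-1)\vartheta=k/m$, we have $(p-1)\vartheta\notin\Z$ for every $1\le k\le m-1$. This is precisely the hypothesis under which sum-of-digits exponential sums become small, and here it holds automatically for \emph{every} prime $p$, so no lower bound on $p$ as in~(\ref{zhaicondition}) is needed --- which is what lets the argument treat $e_2(n^2!)$ and $e_5(n^2!)$.

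I would then invoke the Mauduit--Rivat estimate for the sum of digits of squares, in the form giving
$$U_{j,k}\ll_p (\log x)^{11/4}\,x^{1-\delta^{(1)}_{p,m}},\qquad \delta^{(1)}_{p,m}>0,$$
valid whenever $(p-1)\vartheta\notin\Z$, with the additional polynomial phase $\e(Q(n))$ carried harmlessly through the van der Corput differencing and the Gauss-sum analysis of carry propagation in $n^2$; the exponent $\delta^{(1)}_{p,m}$ degrades with $\|k/m\|\ge 1/m$, while the implied constant may be taken to depend only on $p$. Since $|T_k|\le\max_j|U_{j,k}|$ and there are only $m-1$ nonzero values of $k$, summing these bounds yields the claimed asymptotic uniformly in $a,d,m,r$. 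The prime statement proceeds in exact parallel: Legendre gives $e_p(q!)=(q-s_p(q))/(p-1)$, character detection reduces the count (after removing the progression) to sums $\sum_{q<x,\ q\ \mathrm{prime}}\e\bigl(\beta q-\vartheta\,s_p(q)\bigr)$ with $\beta=\tfrac{j}{d}+\vartheta$, and one applies the Mauduit--Rivat theorem on the sum of digits of primes --- via Vaughan's identity and the resulting type~I and type~II bilinear estimates --- under the same non-degeneracy $(p-1)\vartheta\notin\Z$, producing the error term $\bO\bigl((\log x)^3 x^{1-\delta^{(2)}_{p,m}}\bigr)$.

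The main obstacle is entirely the input digital exponential-sum estimate: establishing the bound for $\sum_{n<x}\e\bigl(Q(n)-\vartheta\,s_p(n^2)\bigr)$, with a polynomial phase and uniformly in the parameters, is the deep technical heart of the matter (and for the prime case the corresponding bound weighted by $\Lambda(q)$ is harder still). By contrast, the character detection, the extraction of the main term from $k=0$, and the recombination of the finitely many $O(dm)$ character sums are routine bookkeeping once the non-degeneracy $(p-1)\vartheta\notin\Z$ has been identified as the mechanism that makes the whole scheme work for all primes $p$.
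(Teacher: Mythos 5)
Your proposal is correct and follows essentially the same route as the paper: both reduce the count, via Legendre's formula and additive characters, to exponential sums of the shape $\sum_{n<x}\e(\alpha s_p(n^2)+\beta n^2+\gamma n)$ with the non-degeneracy $(p-1)\alpha\notin\Z$, and then invoke the hybrid Mauduit--Rivat estimate (Proposition~\ref{prop1}), the polynomial phase being annihilated by the double van der Corput differencing exactly as you indicate. The only difference is bookkeeping: you detect $e_p(n^2!)\equiv r\bmod m$ directly with characters mod $m$ (legitimate since $e_p(n^2!)$ is an integer), whereas the paper detects $n^2-s_p(n^2)\equiv r(p-1)\bmod (p-1)m$ with characters mod $(p-1)m$ after conditioning on the residue of $n^2$; your version is slightly leaner but produces the same main term and the same family of exponential sums.
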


The proof of this result is notably based on recent work by Mauduit and Rivat~\cite{MR09} and Martin, Mauduit and Rivat~\cite{MMRpre}, and uses exponential sum estimates of hybrid type. In contrast, we use an idea of Stoll~\cite{St11pre} to obtain general lower bounds for higher-degree powers. The method is constructive.

\begin{theorem}\label{thm2}
  Let $h\geq 2$, $p$ be a prime, $m, d\geq 1$ and $0\le a<d$, $0\le r<m$. Then, as $x\to \infty$,
  \begin{equation}\label{genbound}
    \#\{n<x : n \equiv a \bmod d,\; e_p(n^h!)\equiv r \bmod m\}\gg_{p,h,d,m} x^{4/(3h+1)}.
  \end{equation}
  Moreover, there is an effectively computable constant $C=C(p,h,d,m)$ such that
  $$ \{e_p(n^h!) \bmod m:\; 0\le n < C, \; n \equiv a \bmod d\}=\mathbb{Z}_m.$$
\end{theorem}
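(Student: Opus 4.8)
The plan is to reduce everything to the base-$p$ digit sum via Legendre's identity~\eqref{legendre}, which gives $e_p(n^h!)=\frac{n^h-s_p(n^h)}{p-1}$, so that $e_p(n^h!)\bmod m$ is determined by $n^h$ and $s_p(n^h)$ modulo $(p-1)m$. The basic device is an additivity property: if the base-$p$ expansion of $n^h$ splits into non-overlapping blocks, i.e.\ $n^h=\sum_B v_B\,p^{j_B}$ with the digit ranges of the summands pairwise disjoint, then $s_p(n^h)=\sum_B s_p(v_B)$ and hence
$$e_p(n^h!)=\sum_B\Bigl(v_B\,\frac{p^{j_B}-1}{p-1}+e_p(v_B!)\Bigr),$$
where each summand is an integer. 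Writing $\sigma(j)=\frac{p^j-1}{p-1}=1+p+\cdots+p^{j-1}$, the sequence $\sigma(j)\bmod m$ is purely periodic in $j$ with some period $t=t(p,m)$, so the position of a block contributes to $e_p(n^h!)\bmod m$ only through $j_B\bmod t$.

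First I would construct $n$ of the sparse shape $n=\sum_{i}a_i p^{k_i}$, with the exponents $k_i$ spread out far enough (a Sidon-type, i.e.\ $B_h$, separation) that the multinomial expansion $n^h=\sum_{\mathbf j}\binom{h}{\mathbf j}\bigl(\prod_i a_i^{j_i}\bigr)p^{\sum_i j_i k_i}$ is carry-free. For such $n$ the formula above makes $e_p(n^h!)\bmod m$ completely explicit: the positions enter only through the periodic factors $\sigma(\sum_i j_i k_i)$, and the digit values only through the coefficients $v_{\mathbf j}=\binom{h}{\mathbf j}\prod_i a_i^{j_i}$ and the constants $e_p(v_{\mathbf j}!)$. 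I would then split the available data into a bounded ``control'' part, used to pin down both $n\equiv a\bmod d$ (by reserving the lowest $\bO(\log d)$ digits) and $e_p(n^h!)\equiv r\bmod m$, and a complementary ``free'' part that may vary without spoiling these two congruences. For the \emph{moreover} assertion of Theorem~\ref{thm2} it suffices to let a single block slide through the residues $j\bmod t$, or to adjust one control window, over an explicitly bounded range; showing that the resulting values of $e_p(n^h!)\bmod m$ exhaust all of $\Z_m$ (rather than a proper subgroup) then yields the effective constant $C=C(p,h,d,m)$.

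For the lower bound~\eqref{genbound} I would enlarge the free part to a few wide windows of consecutive free base-$p$ digits, keeping only enough separation to preserve the carry-free structure of the blocks of $n^h$ that carry the varying information, and keeping the control data fixed so that $e_p(n^h!)\equiv r\bmod m$ and $n\equiv a\bmod d$ hold throughout. Counting the resulting distinct $n<x$ amounts to maximizing the total number of free digits subject to (i) the $h$-fold separation needed for the expansion to remain carry-free, (ii) the digits spent on control, and (iii) the budget $n<x$; carrying out this optimization is what should produce the exponent $4/(3h+1)$.

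I expect the main obstacle to be exactly this tension: a power-of-$x$ count forces $\Theta(\log x)$ free base-$p$ digits, whereas a clean carry-free expansion of $n^h$ — which is what keeps $e_p(n^h!)\bmod m$ under exact control — pushes the free positions apart and throttles their number. Since for $h\ge 3$ one cannot appeal to equidistribution of $s_p(n^h)\bmod(p-1)m$ (that is precisely what is unavailable beyond the squares treated in Theorem~\ref{thm1}), the residue $r$ must be \emph{guaranteed} by the construction rather than merely hit on average, so the real work is to hold $e_p(n^h!)\bmod m$ constant across a family that is still as large as $x^{4/(3h+1)}$. A secondary difficulty is the surjectivity onto $\Z_m$: one must rule out that the swept values lie in a proper subgroup, which may require a $\gcd$-type check and a few additional tuning digits.
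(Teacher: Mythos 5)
Your reduction via Legendre's formula and the block-additivity $e_p(n^h!)=\sum_B\bigl(v_B\sigma(j_B)+e_p(v_B!)\bigr)$ for a carry-free expansion is the right starting point, and your counting heuristic (a few wide windows of free digits, separated enough to keep $n^h$ carry-free) is structurally close to what the paper does. But the step you call a ``secondary difficulty'' --- surjectivity onto $\Z_m$ --- is in fact the central obstruction, and a purely carry-free construction cannot overcome it in general. Sliding a block from position $j$ to $j'$ changes $e_p(n^h!)$ by $v_B(\sigma(j')-\sigma(j))$; when, say, $m=p$ one has $\sigma(l)\equiv 1\bmod p$ for all $l\ge 1$, so moving blocks among positions $\ge 1$ does not change $e_p(n^h!)\bmod m$ at all. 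One is then forced to vary the digit values $v_{\mathbf j}=\binom{h}{\mathbf j}\prod_i a_i^{j_i}$, which for small $p$ (e.g.\ $p=2$, where the only nonzero digit is $1$) offer essentially no freedom and are entangled through the multinomial coefficients. No ``gcd-type check'' repairs this; the residue sweep has to come from somewhere else.

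The paper's key idea, absent from your proposal, is to introduce one controlled \emph{borrow} rather than staying carry-free. It takes $t(x)=d(p-1)m\,(m_3x^3+m_2x^2-m_1x+m_0)+a$ and invokes Lemma 2.1 of \cite{St11pre}: for suitable ranges of the $m_i$, all coefficients of $(t(x))^h$ are positive except the coefficient of $x^1$, which is negative. Evaluating at $x=p^k$ with $p^k$ exceeding all coefficient sizes, the lone negative coefficient forces a borrow producing a run of roughly $k$ digits equal to $p-1$, whence $s_p((t(p^k))^h)=k(p-1)+M$ with $M$ independent of $k$. Therefore $e_p((t(p^k))^h!)\equiv \mathrm{const}-k\bmod m$, and letting $k$ run over $m$ consecutive admissible values sweeps every residue class; this is what makes the ``moreover'' part effective and unconditional. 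The exponent $4/(3h+1)$ then falls out of the four free coefficients $m_0,m_1,m_2,m_3$, each ranging over $\asymp p^u$ values, while $t(p^k)\ll p^{u(3h+1)}$; it is not the outcome of a general optimization over window widths. Without the borrow mechanism (or some substitute making $s_p(n^h)$ depend linearly on a tunable parameter), your construction proves neither assertion of the theorem.
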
 

\medskip

The constant $C$ can be directly obtained from the proof. We remark that
\begin{equation}\label{Zm2}  
  \{e_p(n^h!) \bmod m:\;  0\le n < p^{1/h}+(m-2)d,\; n\equiv a \bmod d\}\neq \mathbb{Z}_m.
\end{equation}
By a probabilistic argument one might expect that we have the full set of residues after about $m \log m$ steps.
However, as~(\ref{Zm2}) shows, this is not true since there is a crucial dependency of $p$ in the bound for $n$ in~(\ref{Zm2}).

\section{Proof of Theorem~\ref{thm1}}

Legendre's formula~(\ref{legendre}) shows that
\begin{equation}\label{nsqu1}
 e_p(n!) \equiv r \bmod m \quad \Longleftrightarrow \quad n - s_p(n) \equiv r(p-1) \bmod (p-1)m.
\end{equation}
In order to prove Theorem~\ref{thm1}, we need some auxiliary results. In particular, we have to deal with exponential sums containing the sum-of-digits function of primes and squares. Let $\omega(b)$ denote the number of different prime divisors of $b$. The first proposition
is a generalization of~\cite[Theorem 1]{MR09} and the second proposition is taken from~\cite[Proposition 4]{MMRpre}.

\begin{prop}\label{prop1}
Let $b\ge 2$ and $\alpha,\beta,\gamma$ real numbers such that $(b-1)\alpha\not\in\Z$. Then there exists a constant $\sigma^{(1)}_{b,\alpha}>0$  such that
\footnote{If $\beta=\gamma=0$, \cite[Theorem 1]{MR09}  shows this result  with an error term of the form $(\log x)^{(\omega(b) +8)/2}  x^{1-\sigma}$ instead of $(\log x)^{(\omega(b) +10)/4} x^{1-\sigma}$. However, we want to remark that the proof given in~\cite{MR09} already implies the better error term as stated in this proposition.}

\[
 \sum_{n<x} \e(\alpha s_b(n^2) + \beta n^2+ \gamma n) \ll_b (\log x)^{(\omega(b) +10)/4} x^{1-\sigma^{(1)}_{b,\alpha}}.
\]
\end{prop}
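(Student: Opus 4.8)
The plan is to follow the structure of the proof of Theorem~1 in~\cite{MR09}, treating the additional polynomial phase $\psi(n)=\beta n^2+\gamma n$ as an essentially inert factor that is progressively eliminated by the differencing steps. Write $u_n=\e(\alpha s_b(n^2)+\psi(n))$, so that the quantity to be estimated is $\sum_{n<x}u_n$. The genuine arithmetic content sits in the digit part $\e(\alpha s_b(n^2))$, while $\e(\psi(n))$ is merely a phase of degree two in $n$; the key observation is that every application of van der Corput's inequality lowers the degree of this polynomial phase in the running variable by one, whereas the digit part is handled exactly as in~\cite{MR09}. The hypothesis $(b-1)\alpha\notin\Z$ is essential precisely here: since $s_b(n)\equiv n\pmod{b-1}$, when $(b-1)\alpha\in\Z$ the digit phase degenerates into a polynomial phase modulo one and merges with $\psi(n)$, leaving no cancellation to exploit; the hypothesis rules this degeneracy out.

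Concretely, I would first apply the van der Corput inequality
\[
 \Bigl|\sum_{n<x}u_n\Bigr|^2\le\frac{x+R}{R}\sum_{|r|<R}\Bigl(1-\frac{|r|}{R}\Bigr)\sum_{n}u_{n+r}\overline{u_n},
\]
with a suitable range parameter $R$. For fixed $r$ the phase contributed by $\psi$ becomes $\e(\psi(n+r)-\psi(n))=\e(2\beta rn+\beta r^2+\gamma r)$, which is at most linear in $n$: the term $\gamma n$ has already lost its $n$-dependence, and the quadratic term has collapsed to the linear factor $\e(2\beta rn)$. The digit difference $s_b((n+r)^2)-s_b(n^2)$ is then treated by the carry-propagation and truncation lemmas of~\cite{MR09}: up to admissible error terms one replaces $s_b$ by a digit sum restricted to a middle window of digits, and this truncated, periodic digit function is expanded in a finite Fourier series. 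This reduces the problem to bounding quadratic exponential sums of the shape $\sum_n\e(h\,\theta(n)+2\beta rn)$, where $\theta(n)$ is the phase produced by the leading digits of $(n+r)^2$ and $h$ runs over the nonzero Fourier frequencies supplied by the condition on $\alpha$.

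The heart of the matter, and the step I expect to be the main obstacle, is to verify that the decisive exponential sum estimate of~\cite{MR09} is \emph{uniform} with respect to the extra linear phase $\e(2\beta rn)$. The final cancellation in~\cite{MR09} is extracted from a quadratic Gauss/Weyl sum whose strength is governed by the second-order behaviour of $\theta$ (equivalently, by a second difference or second derivative), and such bounds are insensitive to an added linear term: the standard estimate for $\sum_n\e(An^2+Bn+C)$ depends only on the quadratic coefficient $A$ and is uniform in $B$ and $C$. Consequently the linear shift $2\beta rn$ can be carried through every intermediate inequality without affecting the outcome, so that the exponent $\sigma^{(1)}_{b,\alpha}$ and the implied constant retain their dependence on $b$ and on the distance of $(b-1)\alpha$ from $\Z$ alone, with no dependence on $\beta$ or $\gamma$. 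What remains is the bookkeeping of tracking this linear phase through the truncation and Fourier steps and confirming that the accumulation of logarithmic factors yields the stated power $(\log x)^{(\omega(b)+10)/4}$; this is the careful but routine part of the argument, introducing no new ideas beyond those already present in~\cite{MR09}.
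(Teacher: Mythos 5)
Your overall strategy is the paper's strategy: run the Mauduit--Rivat proof of \cite[Theorem 1]{MR09} on the extra phase $\psi(n)=\beta n^2+\gamma n$ and use the fact that each van der Corput differencing lowers the degree of $\psi$ in the running variable. Your first step is carried out exactly as in the paper: after the first van der Corput inequality (Lemma 15 of \cite{MR09}) the $\gamma$-term disappears and the $\beta$-term collapses to the linear phase $2\beta rn$. Where you diverge is in what you do with that linear remainder. You propose to push $2\beta rn$ through the truncation and Fourier expansion of the digit sum and then to verify that the decisive Gauss/Weyl-sum estimate of \cite{MR09} is uniform in an added linear phase -- and you flag this uniformity check as the main obstacle. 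The paper avoids this entirely: after truncating $s_b$ to $f_\lambda$ via Lemma 16 of \cite{MR09}, it applies a \emph{second} van der Corput-type inequality (Lemma 17 of \cite{MR09}), under which the linear phase differences to $2\beta(n+sb^\mu)r-2\beta nr=2\beta srb^\mu$, a constant of modulus-one contribution; the resulting inner sum $S_3$ is then \emph{literally identical} to the sum already bounded in \cite[Eq.~(45)]{MR09}, so that estimate can be quoted verbatim with no uniformity analysis whatsoever. This is consistent with your own opening observation that two differencings reduce a quadratic phase to a constant -- you simply stopped one differencing short in the concrete execution. Your route is likely salvageable (linear phases are indeed harmless for the quadratic Gauss sums at the core of \cite{MR09}), but it demands a re-examination of the internals of their argument, whereas the paper's route reduces to their result as a black box after the second differencing. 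If you rewrite the final step to perform both van der Corput applications before invoking the key estimate, your proof becomes the paper's proof, and the step you identified as the main obstacle disappears.
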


\begin{proof}
This result can be proven in the same way as~\cite[Theorem 1]{MR09}. Thus, we just give a short outline. Let $b^{\nu-1} < x \le b^\nu$ and set $f(n) = \alpha s_b(n)$. As in the Mauduit--Rivat case, it suffices to show that
\begin{align}\label{eq:nsqu2}
 S_1:= \sum_{b^{\nu-1} <n \le x} \e(f(n^2) + \beta n^2+ \gamma n) \ll_b (\log x)^{(\omega(b) + 6)/4} x^{1-\sigma^{(1)}_{b,\alpha}}
\end{align}
for some constant $\sigma^{(1)}_{b,\alpha}$.
Lemma 15 from~\cite{MR09} (a van der Corput-type inequality) implies that $S_1$ is bounded by (some constant times)
\begin{align*}
&b^{\nu - \rho/2} +b^{\nu/2}   \max_{1\le |r|< b^\rho}  \Bigg|\sum_{b^{\nu-1} <n \le b^\nu }\!\!\! \e(f((n+r)^2) + \beta (n+r)^2 + \gamma(n+r))\\
&\hspace{4cm} \cdot \e( - f(n^2) - \beta n^2 -\gamma n)\Bigg|^{1/2}\\
&\ll_b b^{\nu - \rho/2} + b^{\nu/2} \max_{1\le |r|< b^\rho} \abs{\sum_{b^{\nu-1} < n \le b^\nu} \e(f((n+r)^2)  - f(n^2) + 2 \beta nr ) }^{1/2}\!\!,
\end{align*}
where $1\le \rho \le \nu/2$ is an integer which we will choose later on. Set $\lambda:= \nu+2\rho+1$. Using~\cite[Lemma 16]{MR09}, we obtain
\begin{align}\label{eq:nsqu3}
 S_1 \ll_b b^{\nu-\rho/2} + b^{\nu/2} \max_{1\le |r|<b^\rho} \abs{ S_2}^{1/2},
\end{align}
where
\[
 S_2 := \sum_{b^{\nu-1} < n \le b^\nu} \e( f_\lambda((n+r)^2)  - f_\lambda(n^2) + 2\beta nr ),
\]
and $f_\lambda(n)$ is defined by
\[
 f_\lambda(n) := \alpha \sum_{0\le j<\lambda} \eps_j(n),
\]
where $\eps_j(n)$ denotes the $j$-$th$ digit of $n$.
Note, that $f_\lambda(n)$ (a so-called truncated sum of digits function) sums up just the $\lambda$ lower placed digits (multiplied with $\alpha$).
Lemma 17 from~\cite{MR09} (again a van der Corput-type inequality) implies now that
\begin{align}\label{eq:nsqu4}
 |S_2|^2 \le b^{2\nu- 2\rho} + b^\nu \max_{1\le |s|<b^{2\rho}} |S_3|,
\end{align}
where
\begin{align*}
 S_3 &= \!\!\!\sum_{I(\nu,s,\mu)} \!\!\!\e( f_\lambda((n+r+sb^\mu)^2) - f_\lambda((n+sb^\mu)^2) + 2\beta(n+sb^\mu)r)\\
&\hspace{4cm}\cdot \e( - f_\lambda((n+r)^2) + f_\lambda(n^2) - 2\beta nr),
\end{align*}
the interval $I(\nu,s,\mu)$  is given by $I(\nu,s,\mu) = \{ n\in \N : b^{\nu-1} < n, n + sb^\mu \le b^\nu \}$ and $\mu$ is an integer satisfying $1\le \mu \le \nu - 2\rho-1$. 
Thus we get that $|S_3|$ is equal to
\begin{align*}
\abs{ \sum_{I(\nu,s,\mu)}\!\!\!\! \e( f_\lambda((n+r+sb^\mu)^2) - f_\lambda((n+r)^2) - f_\lambda((n+sb^\mu)^2) + f_\lambda (n^2) ) }
\end{align*}
Note, that the terms containing $\beta$ and $\gamma$ are vanished. Mauduit and Rivat considered exactly the term $S_3$ 
and they showed that
\begin{align}\label{eq:nsqu5}
 |S_3| \ll_b \nu^{\omega(b) + 6} b^{\nu-2\rho}
\end{align}
for every $1\le\rho\le \nu/2$, $1\le \mu \le \nu-2\rho -1$, $1\le |r|<b^\rho$ and $1\le |s|< b^{2\rho}$ (see~\cite[Eq.~(45)]{MR09}).
Equations~\eqref{eq:nsqu3},~\eqref{eq:nsqu4}, and~\eqref{eq:nsqu5} finally imply
\[
 S_1 \ll_b \nu^{(\omega(b)+6)/4} b^{\nu-\rho/2}.
\]
As in~\cite{MR09}, it is now possible to choose $\rho$ and $\mu$ in order to obtain~\eqref{eq:nsqu2}. This finishes the proof of Proposition~\ref{prop1}.
\end{proof}

\begin{prop}\label{prop2}
 Let $b\ge 2$ and $\alpha,\beta$ real numbers such that $(b-1)\alpha\not\in\Z$. Then there exists a constant $\sigma^{(2)}_{b,\alpha}>0$  such that
\[
 \sum_{\substack{q<x\\q \mbox{ \tiny prime}}} \e(\alpha s_b(q) + \beta q) \ll_b (\log x)^3 x^{1-\sigma^{(2)}_{b,\alpha}}.
\]
\end{prop}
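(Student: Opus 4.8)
The plan is to establish the result in the standard equivalent form for the von Mangoldt function and then to transfer it to primes by partial summation, since the prime powers $q^k$ with $k\ge 2$ contribute only $\bO(x^{1/2}\log x)$ and are harmless. So I would first reduce to bounding $\sum_{n<x}\Lambda(n)\,\e(\alpha s_b(n)+\beta n)$, and then feed this sum into Vaughan's identity. This decomposes it into a bounded number of bilinear pieces of two shapes: \emph{type~I} sums $\sum_{m\le M}c_m\sum_{n}\e(\alpha s_b(mn)+\beta mn)$ with divisor-bounded coefficients $c_m$, and \emph{type~II} sums $\sum_{m\sim M}\sum_{n\sim N}a_mb_n\,\e(\alpha s_b(mn)+\beta mn)$ with $\abs{a_m},\abs{b_n}\ll 1$ and $M,N$ in a suitable middle range. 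The whole difficulty is that $s_b$ is neither additive nor multiplicative, so $\e(\alpha s_b(mn))$ does not factor along the bilinear structure, and one cannot simply separate the digital information from the multiplicative information.

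To decouple the two I would, exactly as in Proposition~\ref{prop1}, replace $s_b$ by the truncated digit function $s_{b,\lambda}$ retaining only the $\lambda$ lowest digits; the truncation error is controlled by a carry-propagation estimate and costs only a power of $\log x$. For the \emph{type~I} sums the inner variable $n$ runs over an interval, and after truncation the map $n\mapsto\e(\alpha s_{b,\lambda}(mn)+\beta mn)$ is almost periodic, so a discrete Fourier / geometric-series argument extracts cancellation directly. For the harder \emph{type~II} sums I would apply Cauchy--Schwarz in $m$ followed by the van der Corput--type inequalities of \cite[Lemmas~15 and~17]{MR09}; the expansion of the square and the differencing then produce sums governed by digital differences of the form $\e\bigl(\alpha(s_{b,\lambda}((n+\ell)m')-s_{b,\lambda}(nm'))\bigr)$, while the extra phase coming from $\beta$ is purely linear and is either cancelled in the difference or absorbed as a benign shift of the Fourier frequency, just as the $\beta,\gamma$ terms dropped out of $S_3$ in the computation above.

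The arithmetic heart is then a bound on the $\ell^1$-norm of the discrete Fourier transform of $u\mapsto\e(\alpha s_{b,\lambda}(u))$: factoring this transform over digit blocks yields a product of single-digit Gauss-type sums, and this is precisely where the hypothesis $(b-1)\alpha\notin\Z$ (Gelfond's condition) is used, since it forces each factor to be strictly less than $1$ and hence produces geometric decay $b^{-\sigma\lambda}$, which is the source of the saving $x^{1-\sigma^{(2)}_{b,\alpha}}$. Assembling the type~I and type~II estimates through Vaughan's identity, optimizing the truncation level $\lambda$ and the van der Corput parameters as in \cite{MR09}, and tracking the $\bO(\log x)$ divisor sums from Vaughan's identity gives the $(\log x)^3$ factor and the stated bound; this is exactly the content of \cite[Proposition~4]{MMRpre}. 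The step I expect to be the main obstacle is the uniform treatment of the type~II sums together with the verification that the linear phase $\e(\beta n)$ survives the differencing without destroying the digital cancellation; once that interaction is controlled, the remainder is a technical but routine adaptation of the Mauduit--Rivat argument for the sum of digits of primes.
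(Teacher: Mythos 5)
The paper offers no proof of Proposition~\ref{prop2} at all: it is imported verbatim from \cite[Proposition 4]{MMRpre}, so there is nothing internal to compare against. Your outline is a faithful description of how that reference (following Mauduit--Rivat's treatment of the sum of digits of primes) actually proceeds --- reduction to $\Lambda$, type~I/type~II decomposition, truncation of $s_b$, Cauchy--Schwarz and van der Corput differencing, and the $\ell^1$ Fourier bound under Gelfond's condition $(b-1)\alpha\notin\Z$ --- so your approach matches the cited source's, granting that it is a sketch rather than a worked proof.
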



\begin{proof}[Proof of Theorem~\ref{thm1}]
We just give a proof of the stated result for the squares. The case $e_p(q!)$, $q$ prime, can be shown exactly the same way but using Proposition~\ref{prop2} instead of Proposition~\ref{prop1}. 
%
In the following we use the abbreviation 
\[
m' = (p-1)m.
\]
Relation~(\ref{nsqu1}) allows us to write
\begin{align*}
\#\{ n<x : n\equiv a \bmod d,\, e_p(n^2!) \equiv r \bmod m\} =  \sum_{0\le j<m'} T_j(x),
\end{align*}
where 
\[
T_j(x):=\# \{ n<x : n \equiv a \bmod d,\, n^2 \equiv j \bmod m',\; s_p(n^2) \equiv j - r(p-1) \bmod m'\}.
\]
Using discrete Fourier analysis, we have
\begin{align*}
T_j(x)& = \sum_{n< x} \left(\frac{1}{d} \sum_{0\le u < d} \e\left( u \frac{n-a}{d}\right)\right)\!\cdot\! \left(\frac{1}{m'} \sum_{0\le k < m'} \e\left( k \frac{n^2-j}{m'}\right)\right)\\
&\hspace{2.5cm} \cdot
\left(\frac{1}{m'} \sum_{0\le \ell < m'} \e\left( \ell \frac{s_p(n^2)-(j-r(p-1))}{m'}\right)\right).
\end{align*}
This can be written as
\begin{align*}
T_j(x)& = \frac{1}{d m'^2 }   \sum_{n< x}   \sum_{0\le u < d} \e\left(u \frac{n- a}{d}\right) \sum_{0\le k < m'} \e\left( k \frac{n^2-j}{m'}\right)\\
&\hspace{2.5cm} \cdot \sum_{\substack{0\le \ell_1 < p-1\\ 0\le \ell_2 < m}} \e\left( (\ell_1 m + \ell_2) \frac{s_p(n^2)-(j-r(p-1))}{m'}\right)\!,
\end{align*}
and we obtain (splitting the part coming from $\ell_2=0$ and $\ell_2>0$)  
\begin{align*}
T_j(x) &= \frac{1}{m'^2} \sum_{\substack{n<x\\ n \equiv a \bmod d}} \sum_{0\le k <m'} \sum_{0\le \ell_1<p-1} \e\left( \frac{kn^2-kj - \ell_1 m j + \ell_1 m s_p(n^2)}{m'} \right) \\
& \;+ \bO \left( \frac{1}{d m'^2} \sum_{\substack{0 \le u < d\\0\le k < m'}} \sum_{\substack{0\le \ell_1 < p-1\\ 0 < \ell_2 < m}} \abs{\sum_{n< x} \e \left( \frac{\ell_1 m + \ell_2 }{m'}s_p(n^2) + \frac{k}{m'}n^2 + \frac{u}{d}n \right)}\right)\!.
\end{align*}
Thus we get that $\#\{ n<x : n \equiv a \bmod d,\, e_p(n^2!) \equiv r \bmod m\}$ is given by
\begin{align}\label{eq:nsqu1}
 MT +  \bO \left(\max_{\substack{0\le u < d\\0\le k <m'\\ 0\le \ell_1<p-1\\0<\ell_2 < m}}\abs{ \sum_{n< x} \e \left( \frac{\ell_1 m + \ell_2 }{m'}s_p(n^2) + \frac{k}{m'}n^2+ \frac{u}{d}n \right)} \right),
\end{align}
where 
\[
MT:=\frac{1}{m'^2} \sum_{0\le j < m'} \sum_{\substack{n<x\\ n\equiv a \bmod d}} \sum_{0\le k <m'} \sum_{0\le \ell_1<p-1} \e\left( \frac{kn^2-kj - \ell_1 m j + \ell_1 m s_p(n^2)}{m'} \right).
\]
Next we calculate the main term $MT$ in~\eqref{eq:nsqu1}. Therefore, let us define $\mathbbm{1}_j(n)$ for all $0\le j < m'$ and for all positive integer $n$ by
\[
 \mathbbm{1}_j(n) =
\begin{cases}
1, & \mbox{ if $n \equiv j \bmod m'$,}\\
0, & \mbox{ otherwise.}
\end{cases}
\]
Then we get that the main term $MT$ is equal to 
\begin{align*}
&\frac{1}{m'} \sum_{0\le \ell_1 < p-1} \sum_{\substack{n<x\\ n \equiv a \bmod d}} \sum_{0\le j < m'} \e\left(\frac{- \ell_1 m j + \ell_1 m s_p(n^2)}{m'} \right)\frac{1}{m'} \sum_{0\le k < m'} \e\left( k\frac{n^2-j }{m'} \right)\\
&= \frac{1}{m'} \sum_{0\le \ell_1 < p-1} \sum_{\substack{n<x\\ n \equiv a \bmod d}} \sum_{0\le j < m'} \e\left(\ell_1\frac{s_p(n^2)-j}{p-1} \right) \cdot \mathbbm{1}_j(n^2).
\end{align*}
Since $s_p(n^2)\equiv j \bmod (p-1)$ if $n^2 \equiv j \bmod m'$, we obtain that the remaining exponential part is equal to $1$ for all nonzero summands. Furthermore, the relation
\[
 \sum_{0\le j < m'} \mathbbm{1}_j(n) = 1
\]
holds trivially for any integer $n$. Thus we finally have
\begin{align*}
 MT &= \frac{1}{m'} \sum_{0\le \ell_1 < p-1} \sum_{\substack{n<x\\ n \equiv a \bmod d}} \sum_{0\le j < m'} \mathbbm{1}_j(n^2)\\
&= \frac{p-1}{m'} \sum_{\substack{n<x\\ n \equiv a \bmod d}} \sum_{0\le j < m'} \mathbbm{1}_j(n^2) = \frac{x}{dm} + \mathcal{O}(1).
\end{align*}

It remains to bound the error term in~\eqref{eq:nsqu1}.
Since $0<\ell_2 <m$, we have
$(\ell_1 m + \ell_2) / m' \cdot (p-1) = \ell_1 + \ell_2/m \not\in\Z$. Thus we can employ Proposition~\ref{prop1} (note, that $\omega(p)=1$ since $p$ is prime). Setting 
\[
 \delta^{(1)}_{p,m} := \min_{\substack{0\le \ell_1 < p-1\\0 < \ell_2 < m}} \sigma^{(1)}_{p, (\ell_1 m +\ell_2)/m'},
\]
we finally obtain the desired result.
\end{proof}

\section{Proof of Theorem~\ref{thm2}}

\begin{proof}[Proof of Theorem~\ref{thm2}]

Consider the polynomial $t(x)\in\Z[x]$ with
$$t(x)=d(p-1)m\cdot\left(m_3x^3+m_2x^2-m_1x+m_0\right)+a,$$
where $m_3, m_2, m_1, m_0$ are positive integers. Lemma 2.1 in~\cite{St11pre} says that for all $u\geq 1$ and
\begin{align}
  p^{u-1} &\le m_0+\frac{a}{d(p-1)m} <p^u,\nonumber\\
  p^{u-1} &\le m_2, m_3 <p^u,\nonumber\\
  1&\le m_1<p^u/\left(hp (6p)^h\right)\label{ranges}
\end{align}
the polynomial $(t(x))^h=\sum_{i=0}^{3h} c_ix^i\in\Z[x]$ has all positive integral coefficients with the only exception of the coefficient of $x^1$ which is negative. Also, note that 
$a\le d(p-1)m $ and thus we have
$$|c_i|\le \left(4p^ud(p-1)m\right)^h.$$
In order to have that the range~(\ref{ranges}) for $m_1$ contains at least one admissible integer $m_1$ we suppose now that $u$ is such that
\begin{equation}\label{ubound}
  p^u> hp(6p)^h.
\end{equation}
Furthermore, let $k$ be such that
\begin{equation}\label{kbound}
  p^k>\left(4p^ud(p-1)m\right)^h.
\end{equation}
Note that $p^k   \gg_{p,h,d,m} p^{uh}$ as $u\to \infty$. We get as in~\cite{St11pre} that
$$s_p((t(p^k))^h) = k(p-1)+M,$$
where $M$ does not depend on $k$ provided $k$ is such as in~(\ref{kbound}). In addition, we have $t(p^k)\equiv a \bmod d$ and $(t(p^k))^h\equiv a^h \bmod (p-1)m$. Therefore, by~(\ref{nsqu1}), for each $k$ with~(\ref{kbound}) and $j\geq 0$ we have
\begin{align*}
  e_p((t(p^{k+j}))^h!) &=\frac{(t(p^{k+j}))^h-s_p((t(p^{k+j}))^h)}{p-1}\\
  &\equiv \frac{a^h}{p-1}-(k+j)-\frac{M}{p-1} \bmod m.
\end{align*}
Note that $(p-1)\vert (a^h-M)$ so that for each fixed $r$ with $0\le r<m$ there is $j$ with $0\le j \le m-1$ such that
\begin{align*}
  e_p((t(p^{k+j}))^h!) &\equiv r \bmod m.
\end{align*}
By construction we thus find $\gg_{p,h,d,m} p^{4u}$ distinct integers that are all $\ll_{p,h,d,m} p^{u(3h+1)}$ (for more details we refer to~\cite{St11pre}). This proves~(\ref{genbound}).

To get an explicit bound for $C(p,h,d,m)$ we only have to make some admissible choices, say, $u_0$ and $k_0$,  for $u$ in~(\ref{ubound}) resp. $k$ in~(\ref{kbound}), and estimate $t(p^{k+m-1})$. First we take $u_0$ to be such that
$p^{u_0}\le hp^2(6p)^h.$ Secondly, we find $k_0$ such that $p^{k_0}\le \left(4p^{u_0}d(p-1)m\right)^h p.$
It is now a straightforward calculation to find
\begin{align*}
  C \le t(p^{k+m-1})&\le 2(p-1)mp^{u_0}p^{3(k_0+m-1)}\\
  &\leq m^{3h+1}d^{3h}(p-1)^{3h+1} p^{3m} \left(4h p^2 (6p)^h\right)^{1+3h}. 
\end{align*}
This concludes the proof of Theorem~\ref{thm2}.
\end{proof}

\section{Concluding remarks}

We end our discussion with a few remarks. It seems possible to use the approach of Drmota, Mauduit and Rivat~\cite{DMR11} to get an asymptotic formula in Theorem~\ref{thm2} provided that $p$ is a \textit{very large} prime whose size is about exponential in $h$. For \textit{small} $p$ it is already an open and surely very difficult problem to find an asymptotic formula for $e_p(n^3)$ in arithmetic progressions. As a further remark, we also stress the fact that Theorem~\ref{thm1} and Theorem~\ref{thm2} hold for arbitrary quadratic polynomials in place of $n^2$, respectively, for arbitrary $P(x)\in\Z[x]$ of degree $h$ (with $P(\N)\subset \N$) in place of $x^h$. A minor variation of the used arguments will yield these generalizations.  


\end{document}